\newtheorem{theorem}{Theorem}
\newtheorem{proposition}{Proposition}
\newtheorem{corollary}{Corollary}
\newtheorem{definition}[theorem]{Definition}
\newtheorem{proof}{Proof}
\newcommand{\+}[1]{\ensuremath{\mathbf{#1}}}
\DeclareMathAlphabet{\pazocal}{OMS}{zplm}{m}{n}
\pgfplotsset{compat=newest}
\DeclarePairedDelimiter{\expec}{\langle}{\rangle}
\title{Restricted Hidden Cardinality Constraints in Causal Models}
\author{Beata Zjawin
\institute{Perimeter Institute for Theoretical Physics 31 Caroline St. N, Waterloo, Ontario, Canada, N2L 2Y5}
\institute{ International Centre for Theory of Quantum Technologies,  University of Gdańsk, 80-308 Gdańsk, Poland}
\email{beata.zjawin@phdstud.ug.edu.pl}
\and
Elie Wolfe \qquad\qquad Robert W. Spekkens
\institute{Perimeter Institute for Theoretical Physics 31 Caroline St. N, Waterloo, Ontario, Canada, N2L 2Y5}}
\begin{document}
\maketitle

\begin{abstract}
Causal models with unobserved variables impose nontrivial constraints on the distributions over the observed variables. When a common cause of two variables is unobserved, it is impossible to uncover the causal relation between them without making additional assumptions about the model. In this work, we consider causal models with a promise that unobserved variables have known cardinalities. We derive inequality constraints implied by $d$-separation in such models. Moreover, we explore the possibility of leveraging this result to study causal influence in models that involve quantum systems.
\end{abstract}

\section{Introduction}

In causal studies, systems of variables are described by \emph{causal models} \cite{pearl2000causality,spirtes2000causation}, which are composed of two elements: (i) the graphical representation of relationships between variables in a model, encoded in a directed acyclic graph, and (ii)~the~mathematical description of conditional probability distribution of each variable given its causal parents. When a causal model involves \emph{hidden} (i.e., unobserved) variables, any characterization of the model verifiable by observations should only include observed variables. Therefore, one of the objectives of causal inference is to eliminate all hidden variables from inequalities and equalities that describe the model. In principle, this can be achieved using the Tarski–Seidenberg quantifier elimination method \cite{geiger1999quantifier}. However, its complexity is such that only models with few variables can be solved using this technique, hence the reason for the many attempts to simplify the problem. For example, one can derive a set of incomplete constraints on probability distributions in latent causal models \cite{evans2012graphical,tian2012testable,kang2012polynomial,rosset2016nonlinear,tavakoli2016bell}. As these models impose highly nontrivial constraints on the probability distributions, Chaves et al.  \cite{chaves2014inferring} derive constraints expressed using \emph{entropies}. Interestingly, the entropic framework originates from the foundations of quantum mechanics \cite{chaves2013entropic,fritz2012beyond}. Furthermore, causal effects in latent causal structures can be estimated by limiting the causal influence between a confounder and its children \cite{rosset2017universal,steudel2015information}. Unfortunately, these methods only provide the necessary, but not sufficient, constraints for complete characterization of a~causal model.  

The field of causal inference is relevant to a wide range of disciplines. Curiously, quantum correlations seem to have no satisfactory explanation via classical causal models. The most famous example is Bell’s theorem \cite{bell1964einstein}. The underlying causal structure of a Bell experiment was investigated in \cite{wood2015lesson}. It was shown that it is impossible to explain Bell-inequality violations using a classical causal model without fine-tuning. As this result only holds for the framework of classical causal inference, it appears that quantum theory calls for a quantum generalization of this formalism. Many works attempt to describe causality in quantum systems by generalizing classical concepts to the quantum formalism, for example: quantum Reichenbach’s principle \cite{allen2017quantum}, quantum Bayes’ theorem \cite{leifer2013towards} and quantum randomized trial \cite{ried2015quantum}. We suggest that studying classical causal models with hidden variables can provide a valuable insight into quantum causality.

In this work, we analyze causal models with hidden variables that exhibit $d$-separation relations, under the assumption that the cardinality of the unobserved variables is known. We derive inequality constraints on the probability distribution over observed variables in such scenarios. We discuss possible applications of these constraints, both in classical causal inference and in quantum causality.

\subsection*{Outline}
Section~2 reviews the framework of graphical causal models and the definition of nonnegative rank. Section~3 contains the main results. It begins with description of nonnegative rank as a measure of dependence. Afterwards, it provides constraints implied by $d$-separation in causal models with hidden variables. Section~4 describes applications of constraints derived in Section~3. Section~5 shows that our constraints are sufficient, but not necessary for identifying direct causal influence. Finally, Section~6 discusses possible generalisations of our constraints to the quantum formalism. Lastly, Section~7 concludes the previous sections.

\section{Preliminaries}
\subsection*{Graphical Causal Models}

In this section, we briefly recall basic notions and important results from graph theory and causal inference.

Consider a system of \emph{causally} related events. We represent each event by a random variable and we denote the set of variables that corresponds to the system of events as $\+V$. In general, $\+V$ can contain both observed and unobserved variables. 

The variables in $\+V$, and relations between them, can be represented by a graph. Graphs consist of vertices and edges. We call a graph a \emph{causal graph}, if the edges that connect the vertices are directed and correspond to causal relations. For example, a causal graph $X \rightarrow Y$ indicates that the variable $X$ has a direct causal influence on $Y$. Here, $X$ is said to be a parent of $Y$. We denote all parents of a random variable $V$ in a graph $\mathcal G$ by $pa_{\mathcal G}(V)$.

In causal inference studies, relationships between variables are illustrated by \emph{Directed Acyclic Graphs} (DAGs) \footnote{A graph is directed, if all its edges are directed (they point in a certain direction which is marked with a single arrow). A directed graph is acyclic if there is no directed path from any variable back to itself.}. A \emph{causal model} consists of a DAG $\mathcal G$ over a set of variables $\+V$ and a joint probability distribution $P_{\+V}$. Causal models provide not only a probabilistic, but also a causal interpretation of the relationships between variables in a system. 

Properties of a DAG that represents variables in $\+V$ imply properties of the distribution $P_{\+V}$. The essential one is the Markov condition which states that $P_{\+V}$ can be decomposed as
\begin{align}
    P_{\+V}=\prod_{V \in {\+V}} P(V \mid pa_{\mathcal G}(V)).
\end{align}
Naturally, this condition can be tested only if all variables in $\+V$ are observed. It means that given a DAG $\mathcal G$ and a probability distribution $P_{\+V}$, with all variables in $\+V$ being observed, the Markov condition enables us to test the \emph{compatibility} of $\mathcal G$ and $P_{\+V}$. However, when some variables in $\+V$ are unobserved, testing compatibility is a hard task, as we already mentioned in the introduction.

A different property of DAGs that is important in causal inference studies is $d$-separation. Before we formally define it, let us recall the notion of blocking a path.

Let $p$ be a path in a DAG $\mathcal G$. We say that $p$ is blocked by a set of vertices $\+Z$ if and only if
\begin{itemize}
\item  $p$ contains a fork structure ($X
\leftarrow Z \rightarrow Y$) or a chain structure ($X \rightarrow Z \rightarrow Y$) in which the middle vertex $Z$ is in $\+Z$, or 
\item $p$ contains a collider structure ($X \rightarrow Z
\leftarrow Y$) in which neither the middle vertex $Z$ nor any of its descendants is in $\+Z$.
\end{itemize}

\begin{definition}[$d$-separation] \label{def:d-sep}
$\+X$ and $\+Y$ are said to be \emph{d-separated} by a set $\+Z$, $(\+X\perp_d~\+Y|\+Z)$, if and only if $\+Z$ blocks every path from a vertex in $\+X$ to a vertex in $\+Y$.
\end{definition}

Any $d$-separation relation $(\+X\perp_d~\+Y|\+Z)$ in $\mathcal G$ implies that $\+X$ and $\+Y$ are conditionally independent given $\+Z$, ($\+X \perp \+Y | \+Z$), in the corresponding distribution $P_{\+X\+Y\+Z}$.

Notice that testing whether conditional independence relations implied by $\mathcal G$ hold is only possible when all variables in $\+X$, $\+Y$ and $\+Z$ are observed. When dealing with unobserved variables, conditional independence relations are sometimes impossible to evaluate.

\subsection*{Nonnegative factorization}

We now review the definitions of nonnegative matrix factorization and nonnegative rank as they will play an important role in our results.

Nonnegative matrix factorization is a group of algorithms used for factorizing a nonnegative matrix (matrix with no negative entries) into two or more nonnegative matrices. The problem of performing a nonnegative matrix factorization is not always tractable, nevertheless numerical approximations of this task exist. This method is used in many fields of science, such as astronomy, bioinformatics and machine learning, as they study data with positive entries \cite{berne2007analysis,berry2007algorithms,murrell2011non}. Furthermore, the nonnegative rank, which we define below, plays a critical role in communication theory as it characterizes randomized communication complexity and randomized correlation complexity \cite{jain2013efficient,zhang2012quantum}. 

Formally, given a nonnegative matrix $\+M \in \mathbb{R}_+^{n\times m}$, a nonnegative factorization of $\+M$ means factorizing it into
\[
\+M =\+U \+V, 
\]
where $\+U \in \mathbb{R}_+^{n\times r}$ and $\+V \in \mathbb{R}_+^{r\times m}$ are nonnegative. The nonnegative rank of $\+M$, $rank_{+}[\+M]$, is the smallest number $r$ for which such a factorization can be found. Equivalently, the nonnegative rank of $\+M$ is the smallest number for which $\+M$ can be decomposed into a positive sum over $r$ nonnegative rank-1 matrices \cite{cohen1993nonnegative}. For the purpose of this paper, we formalize the definition of the nonnegative rank in a following way.

\begin{definition}[nonnegative rank]
  \label{def:nonnegative-rank}
  Let $\+P_{XY}$ be a ($|X| \times |Y|$)-dimensional nonnegative matrix. The nonnegative rank of $\+P_{XY}$ is defined as
  \begin{align}\label{eq:nnr}
  rank_+[\+P_{XY}] \equiv \min \{r \mid
  \exists (\+P_{X}^{(1)}, \+P_{Y}^{(1)}), \dots, (\+P_{X}^{(r)}, \+P_{Y}^{(r)})
  ~~\+P_{XY} = \sum_{z = 1}^r \+P_{X}^{(z)}\+P_{Y}^{(z)} \},
  \end{align}
where $\+P_{X}^{(z)}$ is a nonnegative $|X|$-dimensional row vector and each $\+P_{Y}^{(z)}$ is a nonnegative $|Y|$-dimensional column vector.
\end{definition}

The matrices in the set $\{\+P_{X}^{(z)}\+P_{Y}^{(z)}\}_z$ have rank 1, hence this definition coincides with the standard definition of the nonnegative rank.

\section{Rank constraints for causal models}

In this section, we explore a relation between the nonnegative rank of a probability distribution over two variables, and the causal relation between these variables.

\subsection*{Nonnegative rank as a measure of dependence}

We start by making a trivial observation that any joint probability distribution over two variables can be represented as a nonnegative matrix $\+P_{XY}$, with entries $\+P_{xy}=\+P_{X=x,Y=y}=P(X=x,Y=y)$. Notice that although we donote $\+P_{xy}$ using the bold font, it is a single element of the matrix $\+P_{XY}$. If $X$ and $Y$ are independent, $\+P_{XY}$ can be factorized as $\+P_{XY}=\+P_X \+P_Y$. Then, it follows from Definition~\ref{def:nonnegative-rank} that $\+P_{XY}$ has nonnegative rank $1$. Conversely, as the correlation between $X$ and $Y$ increases, so does the nonnegative rank of their distribution. It means that the nonnegative rank of a probability distribution can be interpreted as a measure of dependence between two variables. 

Consider an example in which the dependence between $X$ and $Y$ is only due to a third variable, denoted by $Z$. Let $Z$ take $r$ different values, i.e., the cardinality of $Z$ is $r$, which we denote $|Z|=r$. Throughout this paper we take $r$ to be the minimum cardinality of $Z$ for which $X$ and $Y$ are conditionally independent. If $X$ and $Y$ are conditionally independent given $Z$, the probability distribution must factorize as:
\begin{align}\label{eq:ci}
    \+P_{XY}=\sum_{z=1}^{r} \+P_{X|Z=z} \+P_{Y|Z=z} \+P_{Z=z}.
\end{align}
Here, $\+P_{X|Z=z}$ ($\+P_{Y|Z=z}$) is a conditional probability distribution over $X$ ($Y$) given that $Z$ takes a particular value $Z=z$. This factorization is equivalent to the one given in Eq.~\eqref{eq:nnr}, with $rank_{+}[\+P_{XY}]=r$. We summarize this result in the following Proposition.

\begin{proposition}\label{prop:nn}
Let $\+P_{XY}\in \mathbb{R}_+^{n\times m}$ represent a joint probability distribution on random variables $X$ and $Y$ that have finite cardinalities, $|X|=n$ and $|Y|=m$. The following are equivalent:
\begin{enumerate}
    \item There exists a nonnegative factorization of $\+P_{XY}$ with $rank_{+}[\+P_{XY}]=r$.
    \item $\+P_{XY}$ can be decomposed as in Eq.~\eqref{eq:ci} with $|Z|=r$.
\end{enumerate}
\end{proposition}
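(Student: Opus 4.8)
The plan is to prove the equivalence by establishing the two implications separately, observing that each direction amounts to matching the summation in Eq.~\eqref{eq:ci} against the summation in Eq.~\eqref{eq:nnr} term by term.

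First I would prove that statement~2 implies statement~1. Assume $\+P_{XY}$ decomposes as in Eq.~\eqref{eq:ci} with $|Z|=r$. For each value $z \in \{1,\dots,r\}$, define the nonnegative row vector $\+P_X^{(z)} \equiv \+P_{X|Z=z}\,\+P_{Z=z}$ (the conditional distribution scaled by the scalar $\+P_{Z=z}\ge 0$) and the nonnegative column vector $\+P_Y^{(z)} \equiv \+P_{Y|Z=z}$. These are nonnegative because conditional probabilities and $\+P_{Z=z}$ are nonnegative. Then $\sum_{z=1}^r \+P_X^{(z)} \+P_Y^{(z)} = \sum_{z=1}^r \+P_{X|Z=z}\+P_{Y|Z=z}\+P_{Z=z} = \+P_{XY}$, which exhibits a nonnegative factorization of length $r$, so $rank_+[\+P_{XY}] \le r$. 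To get equality I would invoke the standing convention stated just before the proposition that $r$ is the \emph{minimum} cardinality of $Z$ for which conditional independence holds; combined with the converse implication (which shows any length-$r'$ nonnegative factorization yields a mediating variable of cardinality $r'$), a factorization shorter than $r$ would contradict minimality, forcing $rank_+[\+P_{XY}] = r$.

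Next I would prove that statement~1 implies statement~2. Suppose $rank_+[\+P_{XY}] = r$, so by Definition~\ref{def:nonnegative-rank} there exist nonnegative vectors with $\+P_{XY} = \sum_{z=1}^r \+P_X^{(z)}\+P_Y^{(z)}$. The idea is to normalize each rank-one term into a properly weighted product of conditional distributions. Let $a_z \equiv \sum_x (\+P_X^{(z)})_x$ and $b_z \equiv \sum_y (\+P_Y^{(z)})_y$ be the (nonnegative) total masses of the two vectors. If $a_z b_z = 0$ for some $z$, that term contributes zero and can be dropped (and then $r$ was not minimal, so this case does not arise under the convention, or can be handled by padding); otherwise set $\+P_{X|Z=z} \equiv \+P_X^{(z)}/a_z$, $\+P_{Y|Z=z} \equiv \+P_Y^{(z)}/b_z$, and $\+P_{Z=z} \equiv a_z b_z$. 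Each $\+P_{X|Z=z}$ and $\+P_{Y|Z=z}$ is then a normalized nonnegative vector, i.e.\ a genuine probability distribution, and $\+P_{XY} = \sum_{z=1}^r \+P_{X|Z=z}\+P_{Y|Z=z}\+P_{Z=z}$. It remains to check that the weights $\+P_{Z=z}$ sum to one: summing all entries of $\+P_{XY}$ gives $1$ since it is a joint distribution, and summing all entries of $\sum_z \+P_{X|Z=z}\+P_{Y|Z=z}\+P_{Z=z}$ gives $\sum_z \+P_{Z=z}$, so $\sum_z \+P_{Z=z} = 1$. Interpreting $z$ as the value of a new latent variable $Z$ with $|Z| = r$, this is exactly the decomposition Eq.~\eqref{eq:ci}, and it makes $X$ and $Y$ conditionally independent given $Z$.

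The main obstacle is not the algebra of rescaling — that is routine — but the bookkeeping around degenerate terms and the precise meaning of "$r$" in both statements. Specifically, one must be careful that a minimal nonnegative factorization cannot contain a zero rank-one term (else $r-1$ would suffice), so the normalization is always well-defined; and one must reconcile the definitional convention that $r$ is the \emph{minimum} cardinality achieving conditional independence with the claim "$|Z| = r$" in statement~2, which is what pins down the equivalence rather than a mere inequality between $rank_+[\+P_{XY}]$ and $|Z|$. I would state this matching of minimal values explicitly, since it is the only non-mechanical point in the argument.
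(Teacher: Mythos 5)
Your proof is correct and follows essentially the same route as the paper's: for $1 \Rightarrow 2$ you normalize each rank-one term by its total mass and collect the scalars into $\+P_{Z=z}$ (exactly the paper's $Q^{(z)}_X, Q^{(z)}_Y$ construction), and for $2 \Rightarrow 1$ you absorb the scalar $\+P_{Z=z}$ into one factor and appeal to the standing minimality convention on $r$. Your extra care about degenerate zero-mass terms and about why minimality pins down equality rather than an inequality is a small improvement on the paper's presentation, but the argument is the same.
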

\begin{proof}
1 $\implies$ 2:
Suppose $\+P_{XY}$ factorizes as in Eq.~\eqref{eq:nnr}, with $\+P_{X}^{(z)}$ and $\+P_{Y}^{(z)}$ being unnormalized probability distributions. Define the sums
\[
Q^{(z)}_{X}=\sum_x \+P_{X=x}^{(z)}\,, \;\;\;\;\;\;\;\;\; Q^{(z)}_{Y}=\sum_y \+P_{Y=y}^{(z)}\,.
\]
Notice that the set of products $\{Q^{(z)}_{X}Q^{(z)}_{Y}\}_z \,$ form a valid probability distribution. We denote it as

\begin{align}\begin{split}
 \+ P_{Z=z} &=Q^{(z)}_{X}Q^{(z)}_{Y},\\
where \,\,\,\,\, \sum_z \+ P_{Z=z} &= \sum_{x,\;y,\;z} \+P_{X=x}^{(z)} \+P_{Y=y}^{(z)} = \sum_{x,\;y} \+P_{xy} =1.
\nonumber
\end{split}\end{align}
Further, define the following vectors:
\begin{align}\begin{split}
 \+C_{X}^{(z)} =\frac{\+P_{X}^{(z)}}{Q^{(z)}_{X}} \,, \;\;\;\;\;\;\;\;\;
 \+C_{Y}^{(z)} =\frac{\+P_{Y}^{(z)}}{Q^{(z)}_{Y}}.
\nonumber
\end{split}\end{align}
We can now decompose $\+P_{XY}$ as
\[
\+P_{XY}=\sum_z \+P_{X}^{(z)} \+P_{Y}^{(z)}  = \sum_z \+ \+C_{X}^{(z)}\, \+C_{Y}^{(z)}\, (\,Q^{(z)}_{X} Q^{(z)}_{Y}\,).
\]
It is now clear that one can define the following conditional probabilities
\begin{align}\begin{split}
 \+P_{X|Z=z}  \equiv \+C_{X}^{(z)}\,, \;\;\;\;\;\;\;\;\;
 \+P_{Y|Z=z} \equiv \+C_{Y}^{(z)},
\nonumber
\end{split}\end{align}
and decompose $\+P_{XY}$ as
\[
\+P_{XY}= \sum_z \+ P_{X|Z=z}\+ P_{Y|Z=z}\+ P_{Z=z},
\]
which is the factorization from Eq.~\eqref{eq:ci}.
\newline
\noindent
2 $\implies$ 1: Assume $\+P_{XY}$ can be factorized as in Eq.~\eqref{eq:ci}. Then, since $\+P_{Z=z}$ is just a positive number, it follows trivially that $rank_{+}[\+P_{XY}]=r$.
\end{proof}
This proposition was also stated in Refs. \cite{gonzalo2017cond}, \cite[Appendix 7.5]{kocaoglu2018applications} and \cite[Proposition 2]{ding2008equivalence}.

Recall that we assumed that $r$ is the minimum cardinality of $Z$ for which $X$ and $Y$ are conditionally independent. The direction (2 $\implies$ 1) of Proposition~\ref{prop:nn} is true due to this assumption. In general, the cardinality of $Z$ can be greater than $rank_{+}[\+P_{XY}]$, which we exploit in the next Subsection.

\subsection*{Restricted cardinality constraints}

Consider the example from the previous Subsection, where $Z$ is assumed to be the only source of dependence between $X$ and $Y$. If $Z$ is an observed variable, it is straightforward to verify whether this hypothesis holds. However, if $Z$ is unobserved, constraints implied by conditional independence relations are impossible to directly evaluate. We now show that knowledge about the cardinality of $Z$ is enough to test conditional independence relations in some cases.

Let us interpret Proposition~\ref{prop:nn} from the perspective of causal models. If $\+P_{XY}$ can be factorized as in Eq.~\eqref{eq:ci}, then there exist a hypothetical variable $Z$, with cardinality $|Z|=r$, and probability distribution $\+P_Z$ that makes $X$ and $Y$ conditionally independent. In graphical causal models, the notion of conditional independence is captured by $d$-separation. If two variables $X$ and $Y$ are conditionally independent given $Z$, a DAG that illustrates the causal structure of $X$, $Y$ and $Z$ must satisfy the following two properties. First, there must be no direct causal influence between $X$ and $Y$ (no direct arrow form $X$ to $Y$, or from $Y$ to $X$). Second, $Z$ must $d$-separate $X$ and $Y$. Three examples of causal structures that satisfy these properties are depicted in Figure~\ref{fig:d-sep}. Circles correspond to observed variables, and dashed circles correspond to unobserved variables. In Figure~\ref{fig:d-sep}~(a), $Z$ is a common cause of $X$ and $Y$. In Figure~\ref{fig:d-sep}~(b), $Z$ is a causal mediary between $X$ and $Y$. Finally, in Figure~\ref{fig:d-sep}~(c), a $d$-separating set $\+Z=\{Z_1,Z_2\}$ consists of both a common cause and a causal mediary. In the first two examples, the cardinalities of the $d$-separating variables are given by $|Z|$. In the third example, the cardinality of the $d$-separating set is equal to $|Z_1|~\times~|Z_2|$.

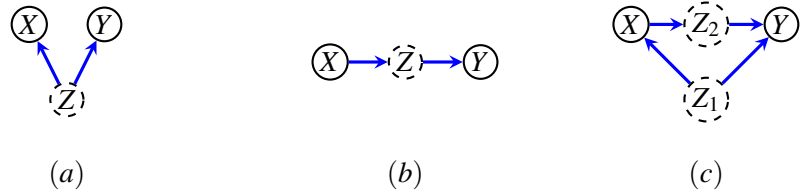
\begin{figure}[h]
  \centering
  \begin{tikzpicture}[>=stealth, node distance=1.0cm]
\tikzstyle{vertex} = [draw, thick, ellipse, minimum size=4.0mm,
            inner sep=1pt]
             
    \tikzstyle{edge} = [
    ->, blue, very thick
    ]

	\begin{scope}
    \node[vertex, circle] (a) {$X$};
    \node[vertex, circle] (b) [right of=a] {$Y$};
    \node[vertex, circle,dashed] (c) [below of=a, xshift=0.5cm] {$Z$};

    \draw[edge] (c) to (a);
    \draw[edge] (c) to (b);

    \node[below of=c] (l) {$(a)$};
  \end{scope}

	\begin{scope}[xshift=4.0cm,yshift=-0.5cm] 
    \node[vertex, circle] (a) {$X$};
    \node[vertex, circle,dashed] (m) [right of=a] {$Z$};
    \node[vertex, circle] (b) [right of=m] {$Y$};

    \draw[edge] (a) to (m);
    \draw[edge] (m) to (b);

    \node[below of=m, yshift=-0.5cm] (l) {$(b)$};
  \end{scope}

	\begin{scope}[xshift=8.0cm]
	    \node[vertex, circle] (a) {$X$};
	    \node[vertex, circle,dashed] (m) [right of=a] {$Z_2$};
    \node[vertex, circle] (b) [right of=m] {$Y$};
    \node[vertex, circle,dashed] (c) [below of=m] {$Z_1$};

    \draw[edge] (c) to (a);
    \draw[edge] (c) to (b);
    \draw[edge] (a) to (m);
    \draw[edge] (m) to (b);

    \node[below of=c] (l) {$(c)$};
  \end{scope}

\end{tikzpicture}

  \caption{
    Examples of $d$-separating variables.
  }
  \label{fig:d-sep}
\end{figure}

From Proposition~\ref{prop:nn}, the variable that $d$-separates $X$ and $Y$ must have at least cardinality $r$, which is given by $rank_{+}[\+P_{XY}]$. This result is summarized in the corollary below.

\begin{corollary} \label{cor}
The nonnegative rank of the matrix $\+P_{XY}$ whose components are $\+P_{xy}=P(X=x,Y=y)$ constitutes a lower bound for the cardinality of a variable $\+Z$ that d-separates $\+X$ and $\+Y$,
\begin{align}\label{eq:cor}
    rank_{+}[\+P_{XY}] \leq |\+Z|.
\end{align}
\end{corollary}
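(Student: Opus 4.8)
The plan is to obtain the bound directly from the semantics of $d$-separation together with the ($2 \implies 1$) direction of Proposition~\ref{prop:nn}, \emph{without} invoking its minimality clause. First I would recall the fact stated immediately after Definition~\ref{def:d-sep}: a $d$-separation relation $(\+X \perp_d \+Y \mid \+Z)$ in the DAG entails the conditional independence $(\+X \perp \+Y \mid \+Z)$ in $P_{\+X\+Y\+Z}$. Treating $\+Z$ as a single (possibly composite) variable ranging over $|\+Z|$ values --- in the case $\+Z = \{Z_1,Z_2\}$ of Figure~\ref{fig:d-sep}(c) this composite variable has $|Z_1|\times|Z_2|$ values --- conditional independence yields
\begin{align}
  \+P_{XY} = \sum_{z=1}^{|\+Z|} \+P_{X|\+Z=z}\,\+P_{Y|\+Z=z}\,\+P_{\+Z=z}. \nonumber
\end{align}

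Next I would observe that this is precisely a factorization of the form in Eq.~\eqref{eq:ci}, but now with the summation index running up to $|\+Z|$ rather than to the minimal value $r$. Each summand $\+P_{X|\+Z=z}\,\+P_{Y|\+Z=z}\,\+P_{\+Z=z}$ is the outer product of a nonnegative row vector with a nonnegative column vector, rescaled by the nonnegative scalar $\+P_{\+Z=z}$, hence a nonnegative rank-$1$ matrix. The display above thus exhibits $\+P_{XY}$ as a sum of $|\+Z|$ nonnegative rank-$1$ matrices, which by Definition~\ref{def:nonnegative-rank} is a valid nonnegative factorization of length $|\+Z|$. Since $rank_{+}[\+P_{XY}]$ is the \emph{minimum} length of such a factorization, we conclude $rank_{+}[\+P_{XY}] \leq |\+Z|$, which is Eq.~\eqref{eq:cor}.

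I do not expect a substantive obstacle: the content is essentially the easy half of Proposition~\ref{prop:nn}. The only points that need care are bookkeeping ones. One must note explicitly that it is the ($2 \implies 1$) direction that is used, and that this direction does not rely on $r$ being minimal --- only the reverse direction did. When $\+Z$ is a set of several variables, one must treat a joint assignment as a single variable whose cardinality is the product of the individual cardinalities, and check that this composite still $d$-separates $\+X$ from $\+Y$; this is immediate, since whether a path is blocked depends only on which vertices lie in the conditioning set. Finally, summands with $\+P_{\+Z=z}=0$ are harmless: they only shorten the factorization and may be discarded.
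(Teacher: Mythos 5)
Your proposal is correct and follows essentially the same route as the paper, which obtains the bound by noting that $d$-separation implies conditional independence, so that $\+P_{XY}$ admits a factorization of the form of Eq.~\eqref{eq:ci} with $|\+Z|$ terms, whence $rank_{+}[\+P_{XY}] \leq |\+Z|$ via the easy direction of Proposition~\ref{prop:nn}. Your explicit remarks --- that only the non-minimal half of the equivalence is needed, that a $d$-separating set is handled as a composite variable of product cardinality, and that zero-probability terms are harmless --- are all correct and in fact make the argument slightly more careful than the paper's one-line derivation.
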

The constraint given in Eq.~\eqref{eq:cor} enables to verify whether conditional independence relations implied by a DAG hold, even when the $d$-separating variable is unobserved. The knowledge about $Z$ required to evaluate the bound is only its cardinality. In the next section, we give examples that use Corollary~\ref{cor} to test conditional independence relations in DAGs with latent variables. 

\section{Applications}
In this section, we give two applications of Corollary~\ref{cor}. First, we describe its potential to improve causal discovery algorithms. In particular, we consider a scenario in which our constraint witnesses direct causal influence between two variables in a presence of a confounder. Second, we reprove two results from communication theory: characterization of randomized correlation complexity and randomized communication complexity. 

\subsection*{Witnessing Causal Influence}

Imagine that two ternary events, $X$ and $Y$ ($|X|=|Y|=3$), are observed to exhibit some statistical dependence. Additionally, assume they share an unobserved common cause $U$ which is known to have cardinality $|U|=2$. Two possible hypotheses regarding the causal relations between $X$, $Y$ and $U$ are illustrated in Fig.~\ref{fig:identifying}.
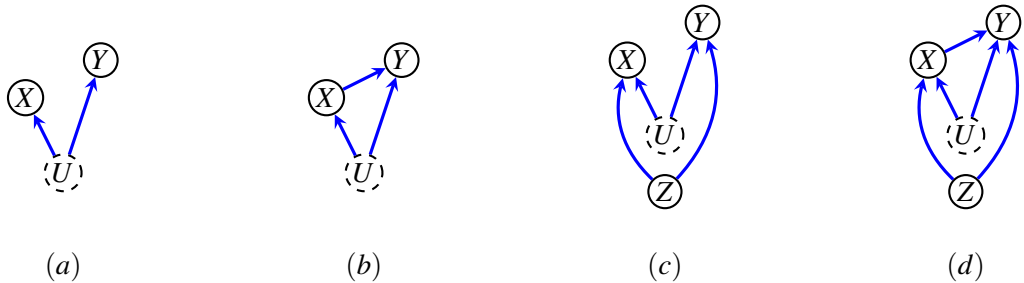
\begin{figure}[h]
  \centering
  \begin{tikzpicture}[>=stealth, node distance=1.0cm]
\tikzstyle{vertex} = [draw, thick, ellipse, minimum size=4.0mm,
            inner sep=1pt]
             
    \tikzstyle{edge} = [
    ->, blue, very thick
    ]
    
    \begin{scope}[yshift=-0.5cm]
    \node[vertex, circle] (x) {$X$};
    \node[vertex, circle] (y) [right of=x,yshift=0.5cm] {$Y$};
    \node[vertex, dashed, circle] (h) [below of=x, xshift=0.5cm] {$U$};

    \draw[edge] (h) to (x);
    \draw[edge] (h) to (y);    
    
    \node[below of=h,yshift=-0.25cm] (l) {$(a)$};
    \end{scope}
    
    \begin{scope}[xshift=4.0cm,yshift=-0.5cm]
        \node[vertex, circle] (x) {$X$};
    \node[vertex, circle] (y) [right of=x,yshift=0.5cm] {$Y$};
    \node[vertex, dashed, circle] (h) [below of=x, xshift=0.5cm] {$U$};

    \draw[edge] (x) to (y);
    \draw[edge] (h) to (x);
    \draw[edge] (h) to (y); 
    
    \node[below of=h,yshift=-0.25cm] (l) {$(b)$};
    \end{scope}
    
    \begin{scope}[xshift=8.0cm]
        \node[vertex, circle] (x) {$X$};
    \node[vertex, circle] (y) [right of=x,yshift=0.5cm] {$Y$};
    \node[vertex, dashed, circle] (h) [below of=x, xshift=0.5cm] {$U$};
    \node[vertex, circle] (z) [below of=x, xshift=0.5cm,yshift=-0.75cm] {$Z$};

    \draw[edge] (h) to (x);
    \draw[edge] (h) to (y); 
    \draw[edge, bend left=30] (z) to (x); 
    \draw[edge, bend right=30] (z) to (y); 
    
    \node[below of=z] (l) {$(c)$};
    \end{scope}
    
    \begin{scope}[xshift=12.0cm]
        \node[vertex, circle] (x) {$X$};
    \node[vertex, circle] (y) [right of=x,yshift=0.5cm] {$Y$};
    \node[vertex, dashed, circle] (h) [below of=x, xshift=0.5cm] {$U$};
    \node[vertex, circle] (z) [below of=x, xshift=0.5cm,,yshift=-0.75cm] {$Z$};

    \draw[edge] (x) to (y);
    \draw[edge] (h) to (x);
    \draw[edge] (h) to (y); 
        \draw[edge, bend left=30] (z) to (x); 
    \draw[edge, bend right=30] (z) to (y); 
    
    \node[below of=z] (l) {$(d)$};
    \end{scope}
\end{tikzpicture}

  \caption{
    Witnessing direct causal influence in a presence of a confounder.
  }
  \label{fig:identifying}
\end{figure}
In Fig.~\ref{fig:identifying}~(a), one believes the statistical dependence is purely due to a common cause, whereas in Fig.~\ref{fig:identifying}~(b), $X$ additionally has a direct causal influence on $Y$. 

In Fig.~\ref{fig:identifying}~(a), $U$ $d$-separates $X$ and $Y$. From Corollary~\ref{cor}, we know that
\[
rank_{+}[\+P_{XY}] \leq |U|.
\]
Therefore, if $rank_{+}[\+P_{XY}] > 2$, one can discard the hypothesis of Fig.~\ref{fig:identifying}~(a) and conclude that there must be a direct causal influence of $X$ on $Y$ as in Fig.~\ref{fig:identifying}~(b). 

As an example, suppose $\+P_{XY}$ is given by the probability distribution from Table~\ref{tab:dist}. In this case, the nonnegative rank of $\+P_{XY}$ is equal to 3. One can certify that $rank_{+}[\+P_{XY}]$ is greater than $|U|$, and only the causal hypothesis of Fig.~\ref{fig:identifying}~(b) is compatible with this probability distribution. In this example, observing $rank_{+}[\+P_{XY}]=3$ \emph{witnesses} direct influence between $X$ and $Y$.

\begin{table}[h]
\begin{center}
\begin{tabular}{ c c | c c c  }
 & & &  Y &  \\
 & & 0 & 1 & 2  \\\hline
 & 0 & 0 &  $\sfrac{1}{6}$  & $\sfrac{1}{6}$\\
 X & 1 & $\sfrac{1}{6}$ & 0 & $\sfrac{1}{6}$ \\
 & 2 & $\sfrac{1}{6}$ & $\sfrac{1}{6}$ & 0 \\

\end{tabular}
\end{center}
\caption{An example of a probability distribution $\+P_{XY}$, with $|X|=|Y|=3$.
}
\label{tab:dist}
\end{table}

The example above shows the essence of Corollary~\ref{cor}. It captures the fact that strong correlation between high cardinality variables, when no direct influence between them exists, must be explained by a high cardinality common cause. This dependence was also noticed in Ref.~\cite[Corollary 1]{kocaoglu2018applications}, where the authors show that the graphs illustrated in Fig.~\ref{fig:identifying}~(a) and Fig.~\ref{fig:identifying}~(b) may sometimes be distinguished under the presumption that the confounder has small cardinality, i.e., $|U| < \,\textrm{min}\,\{|X|,|Y|\}.$

Corollary~\ref{cor} can also be applied to scenarios where the common cause is \emph{partially} observed. We define a partially observed set as a set in which only some variables are observed. An example of a partially observed common cause $\+C=\{Z,U\}$ is illustrated in Fig.~\ref{fig:identifying}. In Fig.~\ref{fig:identifying}~(c), $X$ and $Y$ share two common causes: an unobserved one, denoted by $U$, and an observed one, denoted by $Z$. In Fig.~\ref{fig:identifying}~(d), $X$ additionally influences $Y$. 

In Fig.~\ref{fig:identifying}~(c), $Z$ is an observed variable, and we can condition the values of $X$ and $Y$ on its particular value, $Z=z$. Let $\+P^{(z)}_{XY}$ denote the probability distribution over $X$ and $Y$ conditioned on a particular value $z$ of $Z$, $\+P^{z}_{xy}~=~P(X=x,Y=~y|Z=z)$. After conditioning, $U$ $d$-separates $X$ and $Y$ (given $Z=z$) in Fig.~\ref{fig:identifying}~(c). The constraint given by Corollary~\ref{cor} in this case is
\[ \forall z: \,\,\,rank_{+}[\+P^{(z)}_{XY}] \leq |U|.
\]
Given $\+P^{(z)}_{XY}$ for all $z$, any violation of this condition witnesses direct causal influence between $X$ and $Y$, just like in the example with an unobserved common cause.

Corollary~\ref{cor} can also be used to solve a converse problem. Suppose a distribution $\+P_{XY}$ is observed, where variables $X$ and $Y$ are known to share an unobserved common cause $Z$ (or more generally, are $d$-separated by $Z$). If nothing is known about $Z$, constraint from Eq.~\eqref{eq:cor} can be used to lower bound its cardinality.

\subsection*{Distribution generation}

Recall a distribution generation protocol that is often discussed in communication theory \cite{zhang2012quantum,jain2013efficient}. Suppose two parties, Alice and Bob, are responsible for generating random variables $X$ and $Y$ respectively, such that $X$ and $Y$ have a prespecified joint distribution $\+P_{XY}$. Our goal is to quantify how much information must be communicated or shared to generate $\+P_{XY}$.

In the literature about this problem\, two scenarios have been developed and studied separately. In the first scenario, Alice and Bob each receive
information from a third party before they generate $X$ and $Y$. We denote the information they share by $Z$. In this protocol, no communication between Alice and Bob is allowed. The information $Z$ distributed this way is often referred to as a random seed. The minimum size of a seed that must be given
to Alice and Bob in order to produce the required distribution $\+P_{XY}$ is
called the \textit{randomized correlation complexity} of the joint distribution, and is denoted $RCorr[\+P_{XY}]$. Here, a size of a seed is the number of bits that is distributed to each party, and is given by $\text{log}_2 (|Z|)$. In the second scenario, one-way communication between Alice and Bob is allowed. After generating $X$, Alice sends a message to Bob through an intermediary, which we denote $M$. The minimum number of bits Alice needs to communicate to Bob such that they can produce the target distribution is called the \textit{randomized communication complexity}, and is denoted $RComm[\+P_{XY}]$. The number of bits
communicated between the parties is given by $\text{log}_2 (|M|)$.

It has been shown that $RCorr[\+P_{XY}]$ and $RComm[\+P_{XY}]$ are equal, and are fully characterized by $\text{log}_2(rank_+[ \+P_{XY}])$ \cite{zhang2012quantum}. We now show that given Corollary~\ref{cor}, this result follows trivially from a causal analysis of the problem. To begin, we note that in the first scenario, the seed $Z$ is distributed to both Alice and Bob, hence it can affect both $X$ and $Y$. Graphically, this scenario corresponds to Fig.~\ref{fig:d-sep}~(a), where $Z$ is a common cause of $X$ and $Y$. In the second scenario, the message $M$ can depend on the random variable $X$, which in
turn affects how Bob generates $Y$. This scenario is illustrated by a causal structure in Fig.~\ref{fig:d-sep}~(b), with $M$ playing the role of $Z$ in the figure.

We now observe that $X$ and $Y$ are d-separated by $Z$ and $M$ respectively in the two scenarios. Corollary~\ref{cor} implies that
\[
|Z| \ge rank_+[\+P_{XY}] \qquad \text{and} \qquad |M| \ge
rank_+[\+P_{XY}].
\]
It follows that
\[
\text{log}_2(|Z|) \ge \text{log}_2(rank_+[\+P_{XY}]) \qquad \text{and} \qquad \text{log}_2(|M|) \ge \text{log}_2(rank_+[\+P_{XY}]).
\]
Note that $RCorr[\+P_{XY}]$ and $RComm[\+P_{XY}]$ correspond to the \emph{minimum} size of a seed and a message, respectively, needed to generate $\+P_{XY}$. Therefore, they are given by the lower bound of the above inequalities. Moreover,
Corollary~\ref{cor} does not distinguish a common cause $Z$
from a causal mediary $M$, as they both $d$-separate $X$ and $Y$ in this example. This result yields both the equivalence of $RCorr[\+P_{XY}]$ and $RComm[\+P_{XY}]$, and the fact that they are fully characterized by $\text{log}_2(rank_+[\+P_{XY}])$. This example shows that adopting a causal perspective can simplify results in communication theory.

Finally, we analyze a more involved protocol, in which Alice and Bob share a seed $Z_1$, \textit{and} Alice can communicate to Bob through a mediary $Z_2$. This scenario is illustrated in Fig.~\ref{fig:d-sep}~(c). The cardinality of the
d-separating set $\+Z=\{Z_1, Z_2\}$ is equal to $|Z_1| \times |Z_2|$. It follows from Corollary~\ref{cor} that 
\[
|Z_1| \times |Z_2| \ge rank_+[\+P_{XY}] \qquad \text{and} \qquad \text{log}_2(|Z_1| \times |Z_2|) \ge \text{log}_2(rank_+[\+P_{XY}]).
\]
We can intuitively view the constraint on $|Z_1| \times |Z_2|$ as defining a trade-off between the amount of information shared through a seed and the amount transferred using communication. This perspective enables studying relatively complex problems in which the size of the seed or the capacity of the communication channel is constrained. Similarly, if
one of these cardinalities is unobserved, information about the other can be used to bound it, using Corollary~\ref{cor}.

\section{A Specific Case with Additional Constraints}

The inequality given in Corollary~\ref{cor} gives a sufficient, but not a necessary condition for identifying causal relations between variables. In this Section, we give an example of a correlation that satisfies the constraints implied by the $d$-separation relations in a model, but nevertheless imposes new constraints that require additional causal mechanisms. 

Let us revisit the causal hypotheses illustrated in Fig.~\ref{fig:identifying}~(c) and Fig.~\ref{fig:identifying}~(d). Assume $X$, $Y$ and $Z$ are binary observed variables, and $U$ is a binary unobserved variable. In this scenario, $\+P^{(z)}_{XY}$ is a $2 \times 2$ matrix and the constraint given in Corollary~\ref{cor} is \emph{always} satisfied. This is the case because $rank_{+}[\+P^{(z)}_{XY}] \leq 2$ and $|U|=2$. However, this is not enough to conclude that the causal relations between $X$, $Y$, $Z$ and $U$ are those illustrated by Fig.~\ref{fig:identifying}~(c). In this Section, we give an example of a probability distribution $\+P^{(z)}_{XY}$ that satisfies the above conditions, and requires a direct causal influence as in Fig.~\ref{fig:identifying}~(d). First, we consider a specific probability distribution $\+P^{(z)}_{XY}$, with $|X|=|Y|=|Z|=|U|=2$, under the causal hypothesis of Fig.~\ref{fig:identifying}~(c). Then, we show that the causal structure of Fig.~\ref{fig:identifying}~(c) implies some nontrivial constraints that must be satisfied in order to explain the specific form of $\+P^{(z)}_{XY}$.

We start with a reminder that an intuitive way of describing an outcome of a random variable is with its expectation value. Let $\+V=\{V_1,...,V_n\}$ be a set of $n$ binary variables that each take a value in $\{-1,1\}$. The conversion formula between the probability distribution $P(\+V)=P(V_1,...V_n)$ and the expectation values of variables in $\+V$ can be written as 
\begin{align}\label{eq:formula_exp}
P(V_1,...V_n)=\frac{1}{2^n} \left(1+ \sum_{k=1}^n \; \sum_{1 \leq i_1 < ... < i_k \leq n} v_{i_1}...v_{i_k} \expec{V_{i_1} ... V_{i_k}}\right).
\end{align}

Let us consider a special case when $X$ and $Y$ are observed to be perfectly positively correlated for both possible values of $Z$, 
\begin{align}\label{eq:XYcorr}
    \expec{XY|Z=z}=1.
\end{align}
Assume that this perfect correlation arises due to the causal mechanism encoded in Fig.~\ref{fig:identifying}~(c). Eq.~\eqref{eq:XYcorr} implies that
\begin{align}\label{eq:XY1}
\begin{split}
& \text{(a)} \;\;\;\; \expec{X|Z,U}= \pm 1; \text{and}
\\& \text{(b)} \;\;\;\; \expec{Y|Z,U}=~\expec{X|Z,U},
\end{split}\end{align}
for both values of $Z$ and $U$, that is, $X$ and $Y$ need to depend deterministically on $Z$ and $U$ (Eq.~(\ref{eq:XY1}a)) in the same way (Eq.~(\ref{eq:XY1}b)).

Our goal is to derive constraints on $\expec{X|Z=z}$ that arise due to this condition. If any nontrivial constraints exist, the causal hypothesis of Fig.~\ref{fig:identifying}~(c) explains the perfect correlation between $X$ and $Y$ \emph{only if} they are satisfied. Violation of these constraints calls for an additional causal mechanism (additional to ones in Fig.~\ref{fig:identifying}~(c)) that explains the dependence between $X$ and $Y$. 

In order for the constraints on $\expec{X|Z=z}$ to be verifiable, they can only depend on the observed variables. We perform a quantifier elimination using the known constraints to eliminate the unobserved variable from the set of inequalities and equalities. We find that either
\begin{align}\begin{split}
&{\expec{X|z=1}= \pm \expec{X|z=-1}}
\\& \text{or Min}[\expec{X|z=1},\expec{X|z=-1}]=-1
\\& \text{or Max}[\expec{X|z=1},\expec{X|z=-1}]=1.
\label{eq:expecXconstraints}
\nonumber
\end{split}\end{align}
It then follows from $\expec{XY|Z=z}=1$ that 
\begin{align}\begin{split}
&{\expec{Y|z=1}= \pm \expec{Y|z=-1}}
\\& \text{or Min}[\expec{Y|z=1},\expec{Y|z=-1}]=-1
\\& \text{or Max}[\expec{Y|z=1},\expec{Y|z=-1}]=1.
\nonumber
\end{split}\end{align}
These constraints show that for the special case being considered, one of the following conditions must be true for the marginal expectation values of $X$ conditional on $Z$: 
\begin{enumerate}
\item $\expec{X|Z=z}$  has the same magnitude for $z=1$ and $z=-1$ (it is equal up to a sign); or  
\item $\expec{X|Z=z}$ is deterministic and does not depend on $U$ at all for one value of $Z=z$. 
\end{enumerate}
The same holds for $\expec{Y|Z=z}$. If any of these conditions on $\expec{X|Z=z}$ and $\expec{Y|Z=z}$ are violated, then we conclude that perfect positive correlation between $X$ and $Y$ cannot be explained by the causal structure encoded in Fig.~\ref{fig:identifying}~(c). An additional causal influence between $X$ and $Y$ is needed to explain the considered correlation, for example the one in Fig.~\ref{fig:identifying}~(d).

This is an example of a correlation between $X$ and $Y$ wherein the constraint induced by Corollary~\ref{cor} is satisfied for the $d$-separation relation pertinent to the causal hypothesis of Fig.~\ref{fig:identifying}~(c), but nonetheless this correlation is incompatible with this causal hypothesis. 

\section{Towards Quantum Causality}

We now give a different perspective on the motivation behind studying causal models with hidden common causes. In this Section, we show that DAGs with unobserved common causes can act as classical analogues of models in which a \emph{quantum} common cause is shared. 

Consider a scenario in which two parties share a source of entangled quantum systems and use it to output a classical correlation $\+P_{AB}$. The structure of this protocol is illustrated in Fig.~\ref{fig:Q}. The quantum seed that is distributed to Alice and Bob is denoted by $\rho_C$, quantum systems are represented by double wires and classical variables are represented by single wires. Under the assumption that only one copy of $\rho_C$ is available in this protocol, some form of a quantum copy operation must be performed on it before the distribution. As quantum systems cannot be cloned, the quantum decomposition operation introduced in \cite{allen2017quantum} is applied. It is represented with the white dot symbol and implies the decomposition of the Hilbert space $\mathscr{H}_C$ into a direct sum of tensor products. This operation decomposes $\rho_C$  into factors on $\mathscr{H}_{C_{B}}$ carried to Bob and $\mathscr{H}_{C_A}$ carried to Alice. The outputs of this operation are denoted by $\rho_A$ and $\rho_B$. Assume $\+P_{AB}$ is generated by one of the two protocols. The first protocol is illustrated in Fig.~\ref{fig:Q}~(a). Here, Bob and Alice perform measurements $M_A$ and $M_B$ on their share of the system $\rho_C$ to generate classical variables $A$ and $B$, respectively. The second protocol is encoded in Fig.~\ref{fig:Q}~(b). It allows for classical communication from Bob to Alice before Alice outputs $A$. In this protocol, the value of $A$ is an output of a gate $U_{AB}$ that depends on the value of $B$.
\begin{figure}[h]
  \centering
  \includegraphics[width=0.4\textwidth]{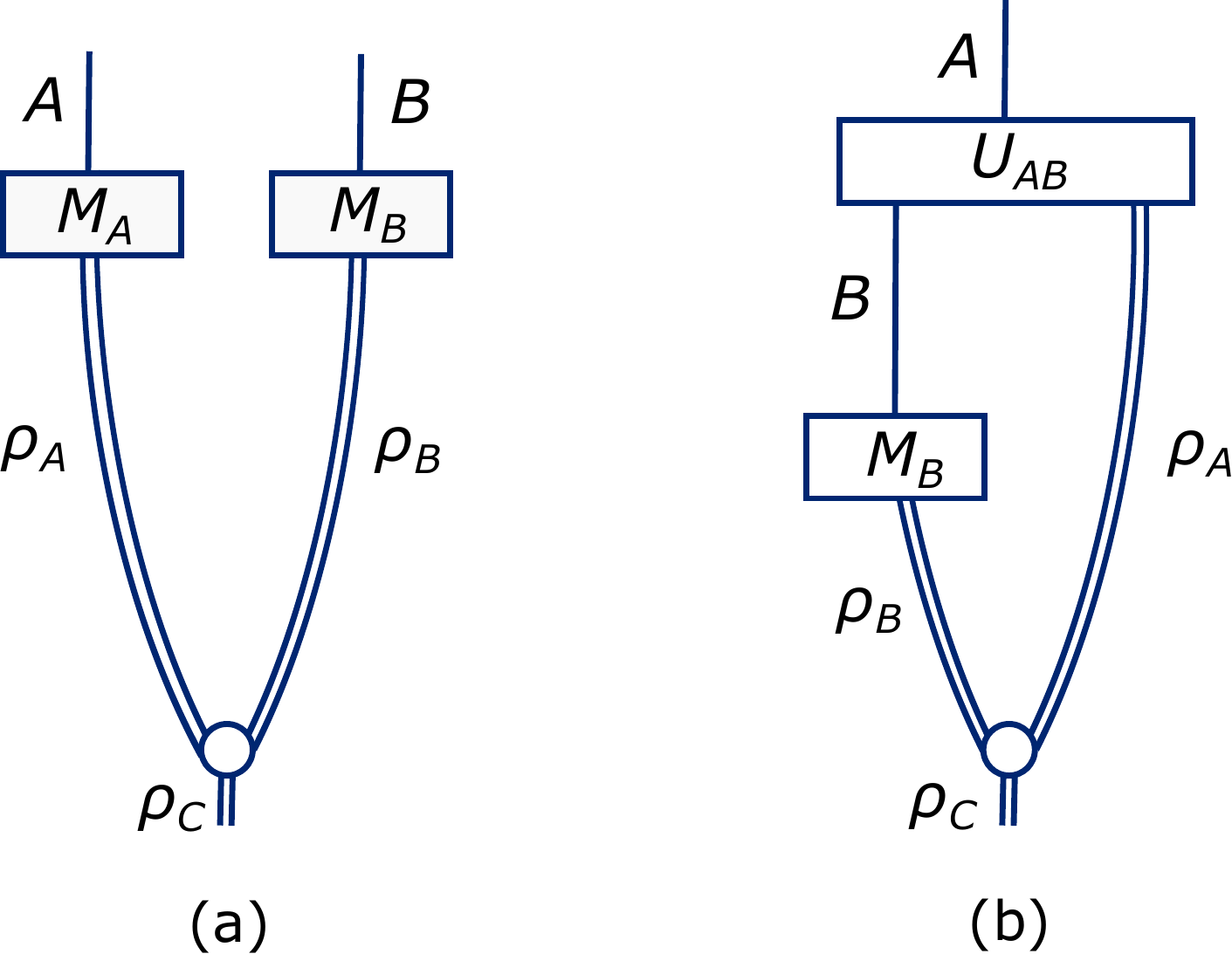}
  \caption{
    Circuit representation of correlation generation protocols.
  }
  \label{fig:Q}
\end{figure}

The main challenge of analyzing these protocols is the restriction on acquiring information about quantum systems relative to classical systems. Once a quantum state is sharply measured, it is projected to a post-measurement state that corresponds to the choice of measurement. This feature of quantum theory prevents the observer from making passive observations of quantum common causes. However, the observer can possess some limited information about the quantum system, for example, its dimension. This situation is analogous to a classical causal model involving an unobserved common cause with restricted cardinality. In such models, the observer has limited knowledge about the classical common cause, namely, its cardinality. This perspective on the problem allows to see the similarities between classical causal models illustrated in Figs.~\ref{fig:identifying}~(a) and (b), and circuits in Figs.~\ref{fig:Q}~(a) and (b). 

Although Corollary~\ref{cor} only concerns classical causal models, a similar concept is used in communication theory \cite{zhang2012quantum,fiorini2012linear}, where models with a quantum seed can be characterized using positive semidefinite rank \cite{jain2013efficient}. Therefore, Corollary~\ref{cor} has the potential to be further extended to models that include quantum systems. Specifically, knowledge about the dimension of a quantum common cause could play a role similar to that of the cardinality of a classical common cause. Formally, the minimum dimension of the local Hilbert space of a shared entangled state needed to generate $\+P_{AB}$ is defined as quantum correlation complexity. It is characterized by the positive semidefinite rank of $\+P_{AB}$ \cite{jain2013efficient}, where positive semidefinite rank is defined as follows. Let $\text{dim}(\rho_A)$ and $\text{dim}(\rho_B)$ be the dimensions of the local Hilbert space of Alice and Bob, respectively.

\begin{definition}[positive semidefinite rank]
  \label{def:psd-rank}
  Let $\+P_{AB}$ be a $\text{dim}(\rho_A) \times \text{dim}(\rho_B)$-dimensional nonnegative matrix. The positive semidefinite rank of $\+P_{AB}$  is defined as
  \begin{align}
  \begin{split}
  rank_{PSD}[\+P_{AB}] \equiv & \min \{r \mid
  \exists (\+E_1,\+F_{1}),..., (\+E_{\text{dim}(\rho_A)}, \+F_{\text{dim}(\rho_B)}) \\
  & P_{ab} = \mathrm{tr}\{E_a F_b \}~~\forall a \in \{1...\text{dim}(\rho_A)\}, b \in \{1...\text{dim}(\rho_B)\},
  \end{split}
  \end{align}
where $\+E_a$ and $\+F_b$ are $r \times r$ positive semidefinite matrices.
\end{definition}

Assume $\text{dim}(\rho_A)=\text{dim}(\rho_B)=d$. It follows that for the hypothesis illustrated in Fig.~\ref{fig:Q}~(a) to hold, the relation
\[
d \geq rank_{PSD}[\+P_{AB}]
\]
must be satisfied. This relation enables us to analyse Figs.~\ref{fig:Q}~(a) and (b) in a similar manner to the analysis of Figs.~\ref{fig:identifying}~(a) and (b) we conducted in the subsection \emph{Witnessing Causal Influence}.

One interesting property of the positive semidefinite rank is that for any matrix $\+M$
\[
    rank_{PSD}[\+M] \leq rank_{+}[\+M].
\]
For example, for the probability distribution $\+P_{XY}$ defined in Table~\ref{tab:dist}, $rank_{+}[\+P_{XY}]=3$, while $rank_{PSD}[\+P_{XY}]=2$ (see Example 2.6 in \cite{fawzi2015positive}). It means that the dimension of a quantum resource needed to generate $\+P_{XY}$ is smaller than the cardinality of a classical resource needed for this task. Examples of this form have been studied before \cite{zhang2012quantum}, and they show an advantage of quantum resources over classical ones.

We finish by noting that a similar approach can be used to analyse a protocol in which a quantum message is sent from Bob to Alice (the quantum analogue of Fig.~\ref{fig:d-sep}~(b)). Similarities between classical and quantum protocols for correlation generation suggest that classical results, like the one given in Corollary~\ref{cor}, can be used to set the stage for the quantum equivalent of causal analysis.

\section{Conclusions}

We derived universal constraints for causal models in which the cardinality of an unobserved $d$-separating variable is known. While the majority of existing methods of constraining distributions compatible with latent causal structures do not use any information about unobserved variables, our result relies on having knowledge of the cardinality of an unobserved variable. We showed that our result has applications in causal analysis. Furthermore, we recognized the analogy between classical causal models with hidden common causes and models that involve quantum systems. As we identify a new way of bounding causal effects in models with hidden variables, we set the stage for quantum extensions of this formulation. 

\section*{Acknowledgments}
B.Z. acknowledges useful discussions with Noam Finkelstein. This research was supported by Perimeter Institute for Theoretical Physics. Research at Perimeter Institute is supported in part by the Government of Canada through the Department of Innovation, Science and Economic Development Canada and by the Province of Ontario through the Ministry of Colleges and Universities.
\nocite{*}
\bibliographystyle{eptcs}
\bibliography{generic}
\end{document}